\title{$L^2$ Properties of L\'{e}vy Generators on Compact Riemannian Manifolds}
\author{David Applebaum, Rosemary Shewell Brockway\\ School of Mathematics and Statistics,\\ University of
Sheffield,\\ Hicks Building, Hounsfield Road,\\ Sheffield,
England, S3 7RH\\ ~~~~~~~\\e-mail: D.Applebaum@sheffield.ac.uk, rjshewellbrockway1@sheffield.ac.uk}
\date{}
\numberwithin{equation}{section} 
\newtheorem{thm}{Theorem}[section]
\newtheorem{cor}[thm]{Corollary}
\newtheorem{lem}[thm]{Lemma}
\theoremstyle{definition}
\theoremstyle{example}
\theoremstyle{remark}
\newtheorem{rmk}[thm]{Remark}
\newtheorem*{ack}{Acknowledgement}
\DeclareMathOperator{\Exp}{Exp}
\DeclareMathOperator{\tr}{tr}
\DeclareMathOperator{\Dom}{Dom}
\DeclareMathOperator{\ind}{\bf 1}
\begin{document}
\maketitle
\begin{abstract}
We consider isotropic L\'evy processes on a compact Riemannian manifold, obtained from an $\mathbb{R}^d$-valued L\'evy process through rolling without slipping. We prove that the Feller semigroups associated with these processes extend to strongly continuous contraction semigroups on $L^p$, for $1\leq p<\infty$. When $p=2$ we show that these semigroups are self-adjoint. If in addition, the motion has a non-trivial Brownian part, we prove that the generator has a discrete spectrum of eigenvalues and that the semigroup is trace-class.
\end{abstract}

\vspace{5pt}

{\it Keywords and Phrases.} Riemannian manifold, frame bundle, connection, horizontal vector field, L\'{e}vy process, L\'{e}vy measure, Feller semigroup, generator, Sobolev space, transition density.

\vspace{5pt}

{\it Mathematics Subject Classification.} 58J65, 60G51, 47D07, 35P05, 60H10, 60J35

\vspace{5pt}

\section{Introduction}
The investigation of L\'{e}vy processes (essentially processes with stationary and independent increments) has become a rich theme in modern probability theory. These processes have a deep and interesting structure, there are wide-ranging applications, and they have been found to be a useful class of driving noises for stochastic differential equations. Usually L\'{e}vy processes are considered as having Euclidean space (or even the real line) as their state space, although Banach and Hilbert space valued processes are important for stochastic partial differential equations (see e.g.~\cite{PZ}). The most intensively studied class of non-linear state spaces for L\'{e}vy processes have been Lie groups. Here the notion of increment is defined using the group law and inverse operation, so if $0 \leq s < t$, where the increment of the process $X$ in Euclidean space is $X(t) - X(s)$, on a Lie group we instead consider $X(s)^{-1}X(t)$. For a dedicated monograph treatment of such processes, see Liao \cite{Liao}.

Having considered L\'{e}vy processes on Lie groups, the next obvious step is to move to a Riemannian manifold. Indeed, arguably the most important L\'{e}vy process in Euclidean space is Brownian motion, and Brownian motion on manifolds is a very well developed theory in its own right (see e.g.~Elworthy \cite{elworthy} and Hsu \cite{hsu}). As its generator is the Laplace--Beltrami operator and transition density is the heat kernel, it is clear that this process is a natural expression of the geometry of the manifold. For development of a more general theory of L\'{e}vy processes, we are hampered by the fact that there is no obvious notion of increment on a manifold. When the space is a symmetric space, there is an alternative point of view pioneered by Gangolli \cite{Gang1, Gang2}, who used spherical functions to establish an analogue of the L\'{e}vy--Khintchine formula. Using the fact that every symmetric space is a homogeneous space $G/K$ of a Lie group $G$, quotiented by a compact subgroup $K$, Applebaum \cite{App1} was able to realise these processes as images of L\'{e}vy processes on $G$ through the natural map (see \cite{LiaoN} for more recent developments).

Motivated in part by the results of \cite{App1}, and also the Eels--Elworthy construction of Brownian motion on a manifold by projection from the frame bundle, Applebaum and Estrade (\cite{AppEstrade}) constructed a process that they called a ``L\'{e}vy process on a manifold'', and proved that it was a Feller--Markov process. The generic such process has the structure of a Brownian motion that is interlaced with jumps along geodesics, which are controlled by an isotropic L\'{e}vy measure. It is now more than twenty years since this paper was published, and to the authors' knowledge, there has been no further work carried out on such processes since that time, other than the ergodic theoretic analysis in Mohari \cite{Moh2}. In the current paper we hope to start the process of resurrecting this neglected area.

Since every L\'{e}vy process on a manifold is a Feller process, there is an associated semigroup on the space of continuous functions vanishing at infinity, and the generator is the sum of the Laplace--Beltrami operator and an integral superposition of jumps along geodesics. Our first new result is to show that the semigroup also preserves the $L^p$ space of the Riemannian volume measure. We are particularly interested in the case $p=2$ and here we show that the semigroup (and hence its generator) is self-adjoint. When there is a non-trivial Brownian motion component to the process, we find that the generator has a discrete spectrum of non-positive eigenvalues. Under the latter condition, we also prove that the semigroup is trace-class, and so the process has a transition density/heat kernel.

\section{L\'{e}vy Processes on Manifolds}
Let $M$ be a compact, connected, $d$-dimensional Riemannian manifold of dimension $d\geq 1$, with Riemannian measure $\mu$. Let $OM$ denote the corresponding bundle of orthonormal frames, a principle $O(d)$-bundle over $M$. The Levi-Civita connection induces a decomposition
\[T_rOM\cong H_rOM\oplus V_rOM, \hspace{15pt} \forall r\in OM\]
of each tangent space of $OM$ into vertical and horizontal subspaces, and, writing $\pi:OM\to M$ for the projection map, we have that for each $p\in M$ and $r\in OM_p$, $d\pi_r$ vanishes on $V_rOM$ and defines a linear isomorphism $H_rOM\cong T_pM$, for each $p\in M$ and $r\in OM_p$. Moreover, $OM$ may be given the structure of a Riemannian manifold in such a way that $d\pi_r:H_rOM\to T_pM$ is an isometric isomorphism\footnote{See Elworthy \cite{elworthy} Chapter III Section 4.}. The corresponding Riemannian measure $\tilde{\mu}$ is sometimes referred as Liouville measure. Observe that
\begin{equation}\label{eq:mu}
\mu=\tilde\mu\circ\pi^{-1}.
\end{equation}
One can easily show that $OM$ is compact, using the fact that both its base manifold and structure group are compact. As such, both $\mu$ and $\tilde\mu$ are finite measures.

The \emph{canonical horizontal fields} consist of a family of vector fields $\{H_x:x\in\mathbb{R}^d\}$ on $OM$, defined by
\[H_x(r)=r(x)^\ast, \hspace{15pt} \forall r\in OM,x\in\mathbb{R}^d,\]
where $^\ast$ denotes horizontal lift. Since $OM$ is compact, these vector fields are complete; we write $\Exp(tH_x)$ for the associated flows of diffeomorphisms. These flows are related to the Riemann exponential map by
\begin{equation}\label{eq:exp}
\pi(\Exp(H_x)(r))=\exp_pr(x), \hspace{15pt} \forall x\in\mathbb{R}^d, \; p\in M, \; r\in OM_p.
\end{equation}

For standard basis vectors $e_i$ of $\mathbb{R}^d$, we use the abbreviation $H_{e_i}=H_i$. The horizontal Laplacian
\[\Delta_H = \sum_{i=1}^dH_i^2,\]
on $OM$ is related to the Laplace--Beltrami operator $\Delta$ on $M$ by
\[\Delta f(p)=\Delta_H(f\circ\pi)(r), \hspace{15pt} \forall f\in C^\infty(M), \; p\in M, \; r\in OM_p;\]
for more details see Hsu \cite{hsu} Chapter 3.

Let $Y$ be an $\mathbb{R}^d$-valued L\'evy process. It is shown in Applebaum and Estrade \cite{AppEstrade} that the Marcus canonical SDE on $OM$
\begin{equation}\label{eq:marcus}
dR(t)= \sum_{j=1}^{d} H_j(R(t-))\diamond dY_j(t), \; t\geq 0; \hspace{15pt} R(0)=r \text{ (a.s.)},
\end{equation}
has a unique, c\`adl\`ag solution $R$ that is a Feller process. As in \cite{AppEstrade}, we will call processes obtained this way horizontal L\'evy processes. We also impose the assumption from \cite{AppEstrade} that the L\'evy process $Y$ is isotropic, in that its law is $O(d)$-invariant. By Corollary 2.4.22 on page 128 of \cite{dave}, the L\'{e}vy characteristics of $Y$ then take the form $(0,aI,\nu)$, where $a\geq 0$, and $\nu$ is $O(d)$-invariant. Then by Theorem 3.1 of \cite{AppEstrade}, the process $X=\pi(R)$ obtained by projection onto the base manifold is also a Feller process. The infinitesimal generators of $R$ and $X$ are
\begin{equation}\label{eq:L}
\mathscr{L}=\frac{1}{2}a\Delta_H+\mathscr{L}_J
\end{equation}
and
\begin{equation}\label{eq:A'}
\mathscr{A} = \frac{1}{2}a\Delta + \mathscr{A}_J,
\end{equation}
respectively, where the jump parts $\mathscr{L}_J$ and $\mathscr{A}_J$ are given by
\begin{equation}\label{eq:L+}
\mathscr{L}_Jf(r) = \int_{\mathbb{R}^d\setminus\{0\}}\left\{f(\Exp(H_x)(r))-f(r)-\ind_{|x|<1}H_xf(r)\right\}\nu(dx), \hspace{15pt} \forall f\in C^\infty(OM), \; r\in OM
\end{equation}
and
\[\mathscr{A}_Jf(p) = \int_{T_pM\setminus\{0\}}\left\{f(\exp_py)-f(p)-\ind_{|y|<1}yf(p)\right\}\nu_p(dx), \hspace{15pt} \forall f\in C^\infty(M), \; p\in M.\]
Here, the family of L\'evy measures $\{\nu_p:p\in M\}$ act on each tangent space $T_pM$, and are defined by $\nu_p=\nu\circ r^{-1}$ for any frame $r\in OM_p$. The two generators also satisfy
\[\mathscr{A}f(p)=\mathscr{L}(f\circ\pi)(r), \hspace{15pt} \forall f\in C^\infty(M), \;p\in M, \;r\in OM_p.\]

Observe that since $\nu$ is $O(d)$-invariant, the right hand side of (\ref{eq:L+}) is invariant under the change of variable $x\mapsto -x$, and hence for all $f\in C^\infty(OM)$ and $r\in OM$,
\begin{equation}\label{eq:L-}
\mathscr{L}_Jf(r) = \int_{\mathbb{R}^d\setminus\{0\}}\left\{f(\Exp(H_{-x})(r))-f(r)+\ind_{|x|<1}H_xf(r)\right\}\nu(dx),
\end{equation}
where we have used the fact that $H_{-x}f=-H_xf$. Summing (\ref{eq:L+}) and (\ref{eq:L-}) and dividing by two,
\begin{equation}\label{eq:L_J}
\mathscr{L}_Jf(r)=\frac{1}{2}\int_{\mathbb{R}^d\setminus\{0\}}\left\{f\big(\Exp(H_{x})(r)\big)-2f(r)+f(\Exp(H_{-x})(r))\right\}\nu(dx)
\end{equation}
for all $f\in C^\infty(OM)$ and $r\in OM$. It follows that
\begin{equation}\label{eq:A_J}
\mathscr{A}_Jf(p) = \frac{1}{2}\int_{T_pM\setminus\{0\}}\left[f(\exp_py)-2f(p)+f(\exp_p(-y))\right]\nu_p(dy)
\end{equation}
for each $f\in C^\infty(M)$ and $p\in M$. Note the analogous expression (5.4.16) in \cite{AppLieProb} for symmetric L\'evy motion on a Lie group.
\section{$L^p$ Properties of the Semigroup}
We now turn our attention to the Feller semigroups associated with $R$ and $X$. These consist of families of contraction operators $(S_t,t\geq 0)$ and $(T_t,t\geq 0)$ acting on the Banach spaces $C(OM)$ and $C(M)$ by
\[S_tf(r)=\mathbb{E}\big(f(R(t))|R(0)=r\big), \hspace{15pt} \forall f\in C(OM),\; r\in OM,\; t\geq 0,\]
and
\[T_tf(p)=\mathbb{E}\big(f(X(t))|X(0)=p\big), \hspace{15pt} \forall f\in C(M),\; p\in M,\; t\geq 0.\]
Since $X=\pi(R)$, it is immediate that
\begin{equation}\label{eq:Tt}
T_tf(p)=S_t(f\circ\pi)(r)
\end{equation}
for all $f\in C(M)$, $p=\pi(r)\in M$, and $t\geq 0$.

Any Riemannian manifold has a natural $L^p$ structure arising from its Riemannian measure, and so we may consider the spaces $L^p(OM)\coloneqq L^p(OM,\tilde{\mu},\mathbb{R})$ and $L^p(M)\coloneqq L^p(M,\mu,\mathbb{R})$ for $1\leq p\leq\infty$. For $p<\infty$, we prove that $(S_t,t\geq 0)$ and $(T_t, t\geq 0)$ extend to strongly continuous contraction semigroups on these spaces, and that they are self-adjoint when $p=2$.

We begin by considering the semigroup $(S_t,t\geq 0)$ associated with the horizontal process $R$; analogous results for $(T_t,t\geq 0)$ are then obtained by projection down onto $M$.
\begin{thm}\label{thm:extn}
For all $1\leq p<\infty$, $(S_t,t\geq 0)$ extends to a $C_0$-semigroup of contractions on $L^p(OM)$.
\end{thm}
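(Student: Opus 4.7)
The plan is to establish that the Liouville measure $\tilde{\mu}$ is invariant under the Markov semigroup $(S_t, t \geq 0)$, deduce $L^p$-contractivity from Jensen's inequality, and obtain strong continuity by a density argument.

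For the invariance step, I would first recall (from Hsu \cite{hsu}, Chapter 3) that each canonical horizontal vector field $H_x$ has vanishing $\tilde{\mu}$-divergence, equivalently, the flow $\Exp(tH_x)$ preserves $\tilde{\mu}$; this is the same fact that underlies the symmetry of $\Delta_H$ on $L^2(OM,\tilde{\mu})$ and yields $\int \Delta_H f \, d\tilde{\mu}=0$ for $f\in C^\infty(OM)$. For the jump part I would use the symmetrized form (\ref{eq:L_J}). A second-order Taylor expansion of $f\in C^\infty(OM)$ along the horizontal flows, combined with compactness of $OM$, yields a uniform bound
\[\bigl|f(\Exp(H_x)(r))-2f(r)+f(\Exp(H_{-x})(r))\bigr|\leq C(|x|^2\wedge 1),\]
and since $\int(|x|^2\wedge 1)\,\nu(dx)<\infty$, Fubini's theorem applies. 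Using invariance of $\tilde{\mu}$ under each $\Exp(H_{\pm x})$ then gives $\int \mathscr{L}_J f\,d\tilde{\mu}=0$, and hence $\int \mathscr{L} f\,d\tilde{\mu}=0$ on the core $C^\infty(OM)$. Standard semigroup theory promotes this to invariance: $\int S_t f\, d\tilde{\mu} = \int f\, d\tilde{\mu}$ for all $f\in C(OM)$.

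For the contraction property, note that $S_t$ is a Markov operator, so for $f\in C(OM)$ Jensen's inequality gives $|S_t f(r)|^p \leq S_t(|f|^p)(r)$. Integrating against $\tilde{\mu}$ and applying invariance,
\[\|S_t f\|_p^p \leq \int_{OM} S_t(|f|^p)\,d\tilde{\mu} = \int_{OM} |f|^p\,d\tilde{\mu} = \|f\|_p^p.\]
Since $OM$ is compact, $C(OM)$ is dense in $L^p(OM)$ for $1\leq p<\infty$, so each $S_t$ extends uniquely to a contraction on $L^p(OM)$, and the semigroup property persists under the extension. For strong continuity, it suffices to check $\|S_t f - f\|_p \to 0$ on $C(OM)$: by the Feller property $S_t f\to f$ uniformly, so since $\tilde{\mu}$ is a finite measure we have $\|S_t f - f\|_p \leq \tilde{\mu}(OM)^{1/p}\|S_t f - f\|_\infty \to 0$. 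An $\varepsilon/3$ argument, using uniform contractivity in the $L^p$ norm, then extends strong continuity to all of $L^p(OM)$.

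The main obstacle I anticipate is Step 1. The fact that $H_x$ preserves $\tilde{\mu}$ is classical, but one must be careful in stitching this to the jump integral: the Taylor bound requires isotropy (symmetrization) to eliminate the first-order term and reduce the singularity to the level controlled by $\int(|x|^2\wedge 1)\,\nu(dx)<\infty$; without isotropy one would need a principal-value interpretation and Fubini would fail. Once invariance is secured, Steps 2 and 3 are standard machinery.
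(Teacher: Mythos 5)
Your argument is correct and follows essentially the same route as the paper: invariance of $\tilde{\mu}$ (resting on the horizontal fields being divergence-free), Jensen's inequality to get the $L^p$-contraction, and density of $C(OM)$ together with finiteness of $\tilde{\mu}$ for strong continuity. The only difference is that you derive the invariance directly from $\int\mathscr{L}f\,d\tilde{\mu}=0$ on a core (via the symmetrized jump term and flow-invariance under $\Exp(H_{\pm x})$), whereas the paper simply cites Theorem 3.1 of \cite{AppKuInvMeas} for this step; your sketch is a sound, more self-contained substitute.
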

\begin{proof}
Let $q_t(\cdot,\cdot)$ denote the transition measure of $(S_t,t\geq 0)$, so that
\[S_tf(r) = \int_{OM}f(u)q_t(r,du), \hspace{15pt} \forall t\geq 0, \; f\in C(OM), \; r\in OM.\]
The horizontal fields $H_x$ are divergence free for all $x\in\mathbb{R}^d$ (Proposition 4.1 of \cite{Moh1}), and so by Theorem 3.1 of \cite{AppKuInvMeas}, $\tilde{\mu}$ is invariant for $S_t$, in the sense that
\[\int_{OM}(S_tf)(r)\tilde{\mu}(dr) = \int_{OM}f(r)\tilde{\mu}(dr), \hspace{15pt} \forall t\geq 0, \; f\in C(OM).\]
Therefore, by Jensen's inequality,
\begin{align*}
\Vert S_tf\Vert_p^{p}=\int_{OM}\left|\int_{OM}f(u)q_t(r,du)\right|^p\tilde\mu(dr) &\leq \int_{OM}\int_{OM}|f(u)|^pq_t(r,du)\tilde\mu(dr) \\
&= \int_{OM}(S_t|f|^p)(r)\tilde\mu(dr) = \int_{OM}|f(r)|^p\tilde\mu(dr) = \Vert f\Vert_p^{p}
\end{align*}
for all $t\geq 0$ and $f\in C(OM)$. Each $S_t$ has domain $C(OM)$, which is a dense subspace of $L^p(OM)$ for all $1\leq p<\infty$. It follows that each $S_t$ extends to a unique contraction defined on the whole of $L^p(OM)$, which we also denote by $S_t$. By continuity, the semigroup property
\[S_tS_s=S_{t+s}, \hspace{15pt} \forall s,t\geq 0\]
continues to hold in this larger domain. It remains to prove strong continuity, i.e.~that
\begin{equation}\label{eq:strongcty}
\lim_{t\rightarrow 0}\Vert S_tf-f\Vert_p = 0
\end{equation}
for all $f\in L^p(OM)$. By density of $C(OM)$ in $L^p(OM)$, it is sufficient to verify this for $f\in C(OM)$. The map $t\mapsto S_t$ is strongly continuous in $C(OM)$, and so $\lim_{t\rightarrow 0}\Vert S_tf-f\Vert_\infty=0$ for all $f\in C(OM)$. Since $(OM,\mathcal{B}(OM),\tilde\mu)$ is a finite measure space,
\[\Vert S_tf-f\Vert_p\leq\tilde\mu(OM)^\frac{1}{p}\Vert S_tf-f\Vert_\infty,\]
for all $f\in C(OM)$. Equation (\ref{eq:strongcty}) now follows.	
\end{proof}
\begin{rmk}\label{rmk:strongcty}
The final part of the above proof applies more generally, in that if $X$ is a compact space equipped with a finite measure $m$, and if $(P_t,t\geq 0)$ is a Feller semigroup on $X$, then $(P_t,t\geq 0)$ is strongly continuous in $L^p(X, m)$.
\end{rmk}
Projection down onto the base manifold yields the following.
\begin{thm}\label{thm:selfadj}
For all $1\leq p<\infty$, $(T_t, t\geq 0)$ extends to a strongly-continuous semigroup of contractions on $L^p(M)$.
\end{thm}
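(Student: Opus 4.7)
The plan is to deduce everything from Theorem \ref{thm:extn} by pulling back through the projection $\pi:OM\to M$. The essential ingredients are the pushforward identity (\ref{eq:mu}), $\mu=\tilde\mu\circ\pi^{-1}$, and the intertwining identity (\ref{eq:Tt}), $T_tf(p)=S_t(f\circ\pi)(r)$ whenever $p=\pi(r)$. Together these will transfer the contraction and strong continuity properties from $(S_t)$ on $L^p(OM)$ down to $(T_t)$ on $L^p(M)$ almost mechanically.

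First I would observe that for any Borel $f:M\to\mathbb{R}$, the change-of-variable formula applied to (\ref{eq:mu}) gives
\[\Vert f\Vert_{L^p(M)}^p=\int_M|f(p)|^p\mu(dp)=\int_{OM}|f(\pi(r))|^p\tilde\mu(dr)=\Vert f\circ\pi\Vert_{L^p(OM)}^p,\]
so $f\mapsto f\circ\pi$ is an isometric embedding of $L^p(M)$ into $L^p(OM)$ that sends $C(M)$ into $C(OM)$. Applying this identity twice along with (\ref{eq:Tt}) and Theorem \ref{thm:extn} yields
\[\Vert T_tf\Vert_{L^p(M)}=\Vert (T_tf)\circ\pi\Vert_{L^p(OM)}=\Vert S_t(f\circ\pi)\Vert_{L^p(OM)}\leq\Vert f\circ\pi\Vert_{L^p(OM)}=\Vert f\Vert_{L^p(M)}\]
for every $f\in C(M)$ and $t\geq 0$. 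Since $M$ is compact, $C(M)$ is dense in $L^p(M)$ for $1\leq p<\infty$, and so each $T_t$ extends uniquely by continuity to a contraction on $L^p(M)$, with the semigroup property carrying over by continuity from $C(M)$.

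For strong continuity, the cleanest route is simply to cite Remark \ref{rmk:strongcty}: $(T_t,t\geq 0)$ is a Feller semigroup on the compact space $M$ equipped with the finite measure $\mu$, so the argument at the end of Theorem \ref{thm:extn} applies verbatim. Alternatively, the pullback identity gives
\[\Vert T_tf-f\Vert_{L^p(M)}=\Vert S_t(f\circ\pi)-(f\circ\pi)\Vert_{L^p(OM)}\]
for $f\in C(M)$, and the right-hand side tends to zero as $t\to 0$ by strong continuity of $(S_t)$ in $L^p(OM)$, which then extends to all $f\in L^p(M)$ by density.

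There is no real obstacle here: the whole argument is a transfer through $\pi$, and the only point requiring any care is the isometric character of $f\mapsto f\circ\pi$, which is guaranteed by (\ref{eq:mu}). If anything needs attention, it is only checking that the $C(M)$-level identities are preserved in passing to the $L^p$ completion, which is routine once the contraction bound is in hand.
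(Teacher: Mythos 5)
Your proposal is correct and follows essentially the same route as the paper: the contraction bound is transferred from $(S_t)$ via the isometry $f\mapsto f\circ\pi$ induced by (\ref{eq:mu}) together with (\ref{eq:Tt}), and strong continuity is obtained either from Remark \ref{rmk:strongcty} or from the pullback identity, exactly as in the paper's own argument.
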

\begin{proof}
Let $1\leq p<\infty$. By equations (\ref{eq:mu}) and (\ref{eq:Tt}), many of the conditions we must check follow from their analogues on the frame bundle. Indeed, for all $f\in C^\infty(M)$ and $t\geq 0$, we have
\[\Vert T_tf\Vert_{L^p(M)}^p = \int_M|T_tf(p)|^pd\mu = \int_{OM}|S_t(f\circ\pi)(r)|^pd\tilde\mu = \Vert S_t(f\circ\pi)\Vert_{L^p(OM)}^p,\]
and so, using the fact that $S_t$ is a contraction of $L^p(OM)$,
\begin{align*}
\Vert T_tf\Vert_{L^p(M)}=\Vert S_t(f\circ\pi)\Vert_{L^p(OM)}\leq\Vert f\circ\pi\Vert_{L^p(OM)}=\Vert f\Vert_{L^p(M)}.
\end{align*}
Hence $T_t$ extends to a contraction of $L^p(M)$ for all $t\geq 0$. It is clear by continuity that the semigroup property continues to hold on this larger domain, as does equation (\ref{eq:Tt}). Strong continuity follows by Remark \ref{rmk:strongcty}, or alternatively can be seen by the observation
\[\Vert T_tf-f\Vert_{L^p(M)} = \Vert S_t(f\circ\pi)-f\circ\pi\Vert_{L^p(OM)} \hspace{20pt} \forall f\in L^p(M).\]
Thus $(T_t, t\geq 0)$ extends to a contraction semigroup on $L^p(M)$ for all $1\leq p<\infty$.
\end{proof}

We continue to denote the generators of $(S_t,t\geq 0)$ and $(T_t,t\geq 0)$ by $\mathscr{L}$ and $\mathscr{A}$, respectively. Note that by Lemma 6.1.14 of \cite{davies2}, $\mathscr{L}$ and $\mathscr{A}$ are both closed operators on $L^p(OM)$.
\section{The Case $p=2$}
For the remainder of this paper we focus on the case $p=2$. Our aim in this section is to prove that the semigroups $(S_t,t\geq 0)$ and $(T_t,t\geq 0)$ are self-adjoint semigroups on $L^2(OM)$ and $L^2(M)$, respectively. By a standard result from semigroup theory, it will follow that $\mathscr{L}$ and $\mathscr{A}$ are self-adjoint linear operators.

Let us first impose the assumption that the L\'evy measure $\nu$ is finite. In this case, $\mathscr{A}_J$ is the generator of a compound Poisson process on $M$ (see \cite{AppEstrade}).
\begin{lem}\label{lem:selfadj}
If $\nu$ is finite, then $\mathscr{L}$ is a self-adjoint operator on $L^2(OM)$.
\end{lem}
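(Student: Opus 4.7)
The strategy is to exploit the decomposition $\mathscr{L} = \frac{a}{2}\Delta_H + \mathscr{L}_J$ from (\ref{eq:L}): since $\nu$ is finite, I will show that $\mathscr{L}_J$ is a bounded self-adjoint operator on $L^2(OM)$ while $\frac{a}{2}\Delta_H$ is self-adjoint on a suitable domain, and conclude by the bounded-perturbation theorem.

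For the jump part, the $O(d)$-invariance of $\nu$ applied with $x \mapsto -x$ kills the term $\int_{|x|<1} H_x f(r)\,\nu(dx)$ in (\ref{eq:L+}), so for finite $\nu$ the expression collapses to $\mathscr{L}_J f(r) = \int_{\mathbb{R}^d\setminus\{0\}}[f(\Exp(H_x)(r)) - f(r)]\,\nu(dx)$. Since each $H_x$ is divergence-free, the flow $\Exp(H_x)$ preserves $\tilde\mu$, so composition with it is an $L^2$-isometry; together with $\nu(\mathbb{R}^d\setminus\{0\}) < \infty$ this gives $\Vert\mathscr{L}_J f\Vert_2 \leq 2\nu(\mathbb{R}^d\setminus\{0\})\Vert f\Vert_2$. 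For symmetry, I compute $\langle \mathscr{L}_J f, g\rangle$, apply Fubini, change variables $r \mapsto \Exp(H_{-x})(r)$ (measure-preserving), and use the $O(d)$-invariance of $\nu$ once more to swap $H_{-x}$ for $H_x$; this produces $\langle f, \mathscr{L}_J g\rangle$. A bounded symmetric operator is self-adjoint.

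For $\frac{a}{2}\Delta_H$ (the case $a=0$ is trivial), divergence-freeness of each $H_i$ gives $\langle H_i f, g\rangle = -\langle f, H_i g\rangle$ on $C^\infty(OM)$, so $\Delta_H$ is symmetric there, and the non-negative form $\mathcal{E}(f,g) = \frac{a}{2}\sum_i \langle H_i f, H_i g\rangle$ is densely defined and closable. Its closure is a Dirichlet form whose Friedrichs generator is self-adjoint, extends $\frac{a}{2}\Delta_H|_{C^\infty(OM)}$, and generates a symmetric $C_0$-contraction semigroup on $L^2(OM)$; uniqueness identifies this semigroup with $(S_t)_{t\geq 0}$ in the case $\nu = 0$ from Theorem~\ref{thm:extn}, so the generator really is self-adjoint. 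Adding the bounded self-adjoint $\mathscr{L}_J$ preserves self-adjointness by the standard bounded-perturbation theorem, yielding the claim.

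The main obstacle is the self-adjointness of $\Delta_H$: it is only a sub-Laplacian on $OM$, engaging just the $d$ horizontal directions out of $d + \binom{d}{2}$, so the classical essential self-adjointness theorem for the Laplace--Beltrami operator on a compact Riemannian manifold does not apply directly. The Dirichlet form route bypasses pointwise regularity by working at the $L^2$ level and exploiting compactness of $OM$ together with invariance of $\tilde\mu$ under the horizontal flows. An alternative that avoids form theory is to verify directly that $\mathscr{L}$ is symmetric on the core $C^\infty(OM)$ and then invoke the general fact that a symmetric generator of a $C_0$-contraction semigroup on a Hilbert space is automatically self-adjoint.
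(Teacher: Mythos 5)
Your treatment of the jump part is correct and in fact supplies details the paper leaves implicit: for finite $\nu$ the compensator term vanishes by the symmetry $x\mapsto -x$, Minkowski's integral inequality plus $\tilde\mu$-invariance of the flows $\Exp(H_x)$ gives $\Vert\mathscr{L}_J\Vert\leq 2\nu(\mathbb{R}^d\setminus\{0\})$, and the change of variables $r\mapsto\Exp(H_{-x})(r)$ combined with the symmetry of $\nu$ gives symmetry of $\mathscr{L}_J$. Where you diverge from the paper is the endgame. The paper does \emph{not} prove self-adjointness of $\Delta_H$ separately and then perturb; it observes that $\mathscr{L}$, being the generator of a $C_0$-contraction semigroup, is closed and has spectrum in the closed left half-plane (Davies, Theorem 8.2.1), notes that $\mathscr{L}$ is symmetric on $\Dom(\Delta_H)$, and then invokes the Reed--Simon trichotomy for closed symmetric operators: the spectrum is the closed upper half-plane, the closed lower half-plane, all of $\mathbb{C}$, or a subset of $\mathbb{R}$, with self-adjointness equivalent to the last case. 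The left half-plane containment rules out the first three. This is precisely the ``general fact'' you mention in your closing paragraph as an alternative.

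The genuine weak point in your primary route is the sentence ``uniqueness identifies this semigroup with $(S_t)_{t\geq 0}$ in the case $\nu=0$.'' A densely defined symmetric dissipative operator such as $\frac{a}{2}\Delta_H|_{C^\infty(OM)}$ can in general admit several m-dissipative (indeed several self-adjoint) extensions, each generating its own contraction semigroup; the Friedrichs/Dirichlet-form extension is only one of them. To conclude that it coincides with the $L^2$-generator of the probabilistically constructed semigroup $(S_t)$ you need to know that $C^\infty(OM)$ is a core for that generator, i.e.\ essential self-adjointness (or at least $L^2$-uniqueness) of the sub-Laplacian $\Delta_H$ on $C^\infty(OM)$ --- which is exactly the nontrivial point you flag as ``the main obstacle,'' and which the Dirichlet-form construction by itself does not deliver. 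The clean repair is your own alternative, applied as the paper does: not to $\Delta_H$ in isolation but to the closed operator $\mathscr{L}$ directly, so that closedness and the spectral localisation come for free from the semigroup, and only symmetry on the domain needs to be checked.
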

\begin{proof}
Since $\nu$ is finite, $\mathscr{L}_J$ is a bounded linear operator on $L^2(OM)$, and so equation (\ref{eq:L_J}) extends by continuity to the whole of $L^2(OM)$. It follows that $\mathscr{L}$ is a bounded perturbation of the horizontal Laplacian, and so its domain is Dom$(\Delta_{H})$. Clearly $\mathscr{L}$ is symmetric on this domain.

Since $\mathscr{L}$ is a closed, symmetric operator, by Theorem X.1 on page 136 of Reed and Simon \cite{reed&simonII}, the spectrum $\sigma(\mathscr{L})$ of $\mathscr{L}$ is equal to one of the following:
\begin{enumerate}
\item The closed upper-half plane.
\item The closed lower-half plane.
\item The entire complex plane.
\item A subset of $\mathbb{R}$.\label{item:R}
\end{enumerate}
Moreover, $\mathscr{L}$ is self-adjoint if and only if Case \ref{item:R} holds. By Theorem 8.2.1 of \cite{davies2},
\begin{equation}\label{eq:specL}
\sigma(\mathscr{L})\subseteq(-\infty,0],
\end{equation}
from which we see that Case \ref{item:R} is the only option.
\end{proof}

We now drop the assumption that $\nu$ is finite.

\begin{thm}\label{thm:selfadj}
$(S_t, t\geq 0)$ and $(T_t,t\geq 0)$ are self-adjoint semigroups of operators on $L^2(OM)$ and $L^2(M)$, respectively.
\end{thm}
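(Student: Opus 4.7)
The plan is to reduce to Lemma \ref{lem:selfadj} by approximating $\nu$ with truncations of its small-jump part. For each $\epsilon > 0$, set $\nu_\epsilon = \nu|_{\{|x|\geq\epsilon\}}$; this is a finite $O(d)$-invariant measure, the finiteness following from $\int(1\wedge|x|^2)\,\nu(dx)<\infty$. Let $\mathscr{L}^\epsilon$ and $(S_t^\epsilon,t\geq 0)$ denote the generator and Feller semigroup corresponding to the characteristics $(0,aI,\nu_\epsilon)$. By Lemma \ref{lem:selfadj}, $\mathscr{L}^\epsilon$ is self-adjoint on $L^2(OM)$, so each $S_t^\epsilon$ is a self-adjoint contraction.

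The next step is to show that $S_t^\epsilon\to S_t$ strongly in $L^2(OM)$ as $\epsilon\to 0$. Using the symmetric form (\ref{eq:L_J}), for $f\in C^\infty(OM)$,
\[(\mathscr{L}-\mathscr{L}^\epsilon)f(r)=\frac{1}{2}\int_{|x|<\epsilon}\bigl\{f(\Exp(H_x)(r))-2f(r)+f(\Exp(H_{-x})(r))\bigr\}\,\nu(dx).\]
A second-order Taylor expansion along the horizontal flow $t\mapsto\Exp(tH_{\hat x})(r)$, with $\hat x = x/|x|$, bounds the bracket by $C\|f\|_{C^2(OM)}|x|^2$ uniformly in $r\in OM$, since $OM$ is compact. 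Combined with $\int_{|x|<1}|x|^2\,\nu(dx)<\infty$ and dominated convergence, this yields $\|(\mathscr{L}-\mathscr{L}^\epsilon)f\|_\infty\to 0$, and hence $\|(\mathscr{L}-\mathscr{L}^\epsilon)f\|_{L^2}\to 0$ since $\tilde\mu$ is finite. Granting that $C^\infty(OM)$ is a core for $\mathscr{L}$ in $L^2(OM)$, the Trotter--Kato approximation theorem then gives $S_t^\epsilon f\to S_t f$ in $L^2(OM)$ for every $f\in L^2(OM)$, uniformly on compact time intervals.

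Self-adjointness passes to strong operator limits: for any $f,g\in L^2(OM)$,
\[\langle S_t f,g\rangle=\lim_{\epsilon\to 0}\langle S_t^\epsilon f,g\rangle=\lim_{\epsilon\to 0}\langle f,S_t^\epsilon g\rangle=\langle f,S_t g\rangle,\]
so $S_t$ is self-adjoint for every $t\geq 0$. To transfer the result to $T_t$, I would use that the pullback $\pi^\ast:L^2(M)\to L^2(OM)$, $f\mapsto f\circ\pi$, is an isometric embedding by (\ref{eq:mu}), whose image is an $S_t$-invariant closed subspace by (\ref{eq:Tt}). Self-adjointness of $S_t$ forces its orthogonal complement to be $S_t$-invariant as well, so the restriction of $S_t$ to $\pi^\ast L^2(M)$ is self-adjoint on that subspace, and transferring through the isometry $\pi^\ast$ delivers self-adjointness of $T_t$ on $L^2(M)$.

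The main obstacle is justifying that $C^\infty(OM)$ is a core for $\mathscr{L}$ in $L^2(OM)$. One first checks that $C^\infty(OM)\subseteq\Dom(\mathscr{L})$ using the same Taylor estimate together with the regularity of $\Delta_H$, and then argues density in the graph norm via the fact that $C^\infty(OM)$ is a core for the Feller generator on $C(OM)$ (cf.\ \cite{AppEstrade}), combined with finiteness of $\tilde\mu$, which makes $C(OM)\hookrightarrow L^2(OM)$ a bounded embedding. An alternative route, should the core property prove delicate, is to work directly with the symmetric Dirichlet form $\mathcal{E}^\epsilon(f,g)=-\langle\mathscr{L}^\epsilon f,g\rangle$ and pass to the limit monotonically, since the jump form is non-negative and increases as $\epsilon\downarrow 0$.
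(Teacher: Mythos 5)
Your overall strategy---approximate by finite L\'evy measures, apply Lemma \ref{lem:selfadj}, pass to a strong limit, and push down to $M$ via the isometry $\pi^\ast$---is the same as the paper's, but the mechanism you propose for the convergence $S_t^\epsilon\to S_t$ is genuinely different, and it is exactly where the difficulty concentrates. The paper argues probabilistically: the solution of (\ref{eq:marcus}) is realised as a stochastic flow $\eta_t$, and the interlacing construction of \cite{AppEstrade} produces flows $\eta_t^{(n)}$ driven by finite L\'evy measures with $\eta_t^{(n)}(r)\to\eta_t(r)$ almost surely; dominated convergence then gives $S_t^{(n)}f\to S_tf$ in $L^2(OM)$ for $f\in C(OM)$, and an $\epsilon/3$ argument using the uniform contraction bound extends this to all of $L^2(OM)$. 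No information about the domain or a core of the $L^2$-generator is needed anywhere.

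Your route instead goes through generator convergence plus Trotter--Kato, and this hinges on the unproved assertion that $C^\infty(OM)$ is a core for $\mathscr{L}$ as an operator on $L^2(OM)$. You flag this yourself, but the justification you sketch does not close it: \cite{AppEstrade} gives $C^\infty(OM)\subseteq\Dom(\mathscr{L})$ for the Feller generator, not that it is a core even in $C(OM)$, and the natural repair---showing that $C^\infty(OM)$ (or the Feller domain) is dense and $S_t$-invariant and invoking the standard core criterion---requires regularity of $r\mapsto\mathbb{E}\big(f(\eta_t(r))\big)$, i.e.\ control of the derivative flow of the jump SDE. That is precisely the technical work the paper's probabilistic argument avoids, so as written your proof has a genuine gap at this step. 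The remaining ingredients are sound: the Taylor estimate on the symmetric difference in (\ref{eq:L_J}) is correct and uniform by compactness of $OM$; strong limits of self-adjoint contractions are self-adjoint; and the reduction of $T_t$ to $S_t$ through the $S_t$-invariant isometric image of $\pi^\ast$ is valid, though the paper gets it in one line from (\ref{eq:mu}) and (\ref{eq:Tt}) via $\langle T_tf,g\rangle_{L^2(M)}=\langle S_t(f\circ\pi),g\circ\pi\rangle_{L^2(OM)}=\langle f\circ\pi,S_t(g\circ\pi)\rangle_{L^2(OM)}=\langle f,T_tg\rangle_{L^2(M)}$, with no need for the orthogonal-complement argument. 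If you want a purely analytic proof, your suggested fallback via monotone convergence of the associated Dirichlet forms is likely the cleaner way to close the gap.
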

\begin{proof}
We will find it convenient to rewrite the process $R(t)$ (with initial condition $R(0) = r$ (a.s.)) as the action of a stochastic flow $\eta_t$ on the point $r$, as in \cite{AppEstrade}. Then as shown in Section 4 of \cite{AppEstrade}, there is a sequence $(\eta^{(n)}_t)$ of stochastic flows on $OM$ such that each $\eta^{(n)}_t$ is the flow of a horizontal L\'{e}vy process with finite L\'{e}vy measure, and
\[\lim_{n \rightarrow\infty}\eta^{(n)}_t(r) = \eta_t(r)~~~\mbox{(a.s.)},\]
for all $r \in OM$ and $t\geq 0$.

Let $(S_{t}^{(n)}, t \geq 0)$ be the transition semigroup corresponding to the flow $(\eta^{(n)}_t, t \geq 0)$, for each $n\in\mathbb{N}$. It is a standard result in semigroup theory that a semigroup of operators on a Hilbert space is self-adjoint if and only if its generator is self-adjoint\footnote{See for example Goldstein \cite{goldstein} page 31.}. By Lemma \ref{lem:selfadj}, $(S_t^{(n)},t\geq 0)$ is a self-adjoint semigroup on $L^2(OM)$, for all $n\in\mathbb{N}$.

By dominated convergence, for each $f \in C(OM)$ and $t\geq 0$, we have
\[\lim_{n \rightarrow\infty}\left\Vert S_{t}f - S_{t}^{(n)}f\right\Vert_{L^2(OM)}^{2} = \lim_{n \rightarrow \infty}\int_{OM}\left|\mathbb{E}\left(f\left(\eta_{t}(r)\right)-f\left(\eta_{t}^{(n)}(r)\right)\right)\right|^{2}\tilde{\mu}(dr) = 0.\]
Then by the density of $C(OM)$ in $L^{2}(OM)$, and a standard $\epsilon/3$ argument (using the fact that $S_{t}^{(n)}$ is an $L^{2}$-contraction), we deduce that for all $f\in L^{2}(OM)$,
\[\Vert S_{t}f\Vert_{L^2(OM)} = \lim_{n \rightarrow \infty}\left\Vert S_{t}^{(n)}f\right\Vert_{L^2(OM)}.\]
So $S_t$ is the strong limit of a sequence of bounded self-adjoint operators, and hence is itself self-adjoint. To see that $(T_t,t\geq 0)$ is also self-adjoint, let $t\geq 0$ and $f,g\in L^2(M)$, and observe that by (\ref{eq:mu}) and (\ref{eq:Tt}),
\[\langle T_tf,g\rangle_{L^2(M)} = \langle S_t(f\circ\pi),g\circ\pi\rangle_{L^2(OM)} = \langle f\circ\pi,S_t(g\circ\pi)\rangle_{L^2(OM)} = \langle f,T_tg\rangle_{L^2(M)}.\]
\end{proof}
By Theorem 4.6 of Davies \cite{davies1}, $-\mathscr{L}$ and $-\mathscr{A}$ are positive self-adjoint operators on $L^2(OM)$ and $L^2(M)$, respectively.
\section{Spectral Properties of the Generator}
For this final section, we restrict attention to the case in which $X$ has non-trivial Brownian part (that is, when $a>0$), and prove some spectral results that are already well-established for the case of Brownian motion and the Laplace--Beltrami operator $\Delta$. For example, it is well known that $\Delta$ has a discrete spectrum of eigenvalues. Each eigenspace is finite-dimensional, and the eigenvectors may be normalised so as to form an orthonormal basis of $L^2(M)$ (see for example Labl\'{e}e \cite{lablee} Theorem 4.3.1). Moreover, such an eigenbasis $(\psi_n)$ can be ordered so that the corresponding sequence of eigenvalues decreases to $-\infty$. For each $n\in\mathbb{N}$, write $-\mu_n$ for the eigenvalue associated with $\psi_n$, so that the real sequence $(\mu_n)$ satisfies
\begin{equation}\label{eq:LaplacianEigen}
0\leq\mu_1\leq\mu_2\leq\ldots\leq\mu_n\rightarrow\infty \text{ as } n\rightarrow\infty.
\end{equation}
We prove an analogous result for $\mathscr{A}$, using a generalisation of the approach used by Labl\'{e}e \cite{lablee}.
\begin{thm}\label{thm:DiscSpec}
Let $X$ be an isotropic L\'{e}vy process on $M$ with non-trivial Brownian part. Then its generator $\mathscr{A}$ has a discrete spectrum
\[\sigma(\mathscr{A}) = \{-\lambda_n:n\in\mathbb{N}\},\]
where $(\lambda_n)$ is a sequence of real numbers satisfying
\begin{equation}\label{eq:lambda}
0\leq \lambda_1\leq\lambda_2\leq\ldots\leq\lambda_n\rightarrow\infty \text{ as } n\rightarrow\infty.
\end{equation}
Moreover, each of the associated eigenspaces is finite-dimensional, and there is a corresponding sequence $(\phi_n)$ of eigenvectors that forms an orthonormal basis of $L^2(M)$.
\end{thm}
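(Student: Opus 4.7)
The plan is to realise $-\mathscr{A}$ as a non-negative self-adjoint operator whose associated quadratic-form domain embeds compactly into $L^2(M)$; the discrete spectrum and orthonormal basis of eigenvectors will then follow from the spectral theorem for self-adjoint operators with compact resolvent. From Section 4 we already know that $-\mathscr{A}$ is non-negative and self-adjoint on $L^2(M)$, so let $\mathcal{E}(f,g) := \langle \sqrt{-\mathscr{A}}\,f,\,\sqrt{-\mathscr{A}}\,g\rangle_{L^2(M)}$ denote the associated closed quadratic form, with form domain $\mathcal{Q} := \Dom(\sqrt{-\mathscr{A}})$.

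The key step is a comparison of $\mathcal{E}$ with the Dirichlet energy $f \mapsto \|\nabla f\|_{L^2(M)}^2$ of $-\Delta$. Applying the results of Section 4 to the pure-jump isotropic L\'evy process obtained by setting $a=0$ shows that $-\mathscr{A}_J$ itself extends to a non-negative self-adjoint operator on $L^2(M)$. On a common form core (such as $C^\infty(M)$), the decomposition $\mathscr{A} = \tfrac{1}{2}a\,\Delta + \mathscr{A}_J$ then yields
\[\mathcal{E}(f) \;=\; \tfrac{a}{2}\|\nabla f\|_{L^2(M)}^2 + \langle -\mathscr{A}_J f,\, f\rangle_{L^2(M)} \;\geq\; \tfrac{a}{2}\|\nabla f\|_{L^2(M)}^2,\]
and a standard closure argument extends this bound to all $f \in \mathcal{Q}$. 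Since $a>0$, this exhibits $\mathcal{Q}$ as a continuous subspace of the Sobolev space $H^1(M)$ when equipped with its form norm.

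Because $M$ is compact, the Rellich--Kondrachov theorem yields a compact embedding $H^1(M)\hookrightarrow L^2(M)$, and hence $\mathcal{Q}\hookrightarrow L^2(M)$ is compact as well. By a standard criterion (cf.\ Davies \cite{davies1} or Reed and Simon \cite{reed&simonII}), this is equivalent to $-\mathscr{A}$ having compact resolvent, so its spectrum is purely discrete, consisting of real eigenvalues of finite multiplicity accumulating only at $+\infty$. Non-negativity of $-\mathscr{A}$ gives the ordering (\ref{eq:lambda}), and the spectral theorem for compact self-adjoint operators, applied e.g.\ to $(I - \mathscr{A})^{-1}$, furnishes an orthonormal eigenbasis $(\phi_n)$ of $L^2(M)$.

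The main obstacle I anticipate is justifying the additive splitting of $\mathcal{E}$ above on a dense subspace of $\mathcal{Q}$; equivalently, showing that $C^\infty(M)$ is a common form core for $\mathscr{A}$ and $\mathscr{A}_J$. Since $\nu$ may be infinite near the origin, this will require demonstrating that $\mathscr{A}_J$ extends from $C^\infty(M)$ to a bounded operator from a Sobolev space (at worst $H^2(M)$) into $L^2(M)$, using the second-order Taylor expansion of $f\circ\exp_p$ about $0\in T_pM$ together with the integrability of $|y|^2\wedge 1$ against $\nu_p$ coming from the L\'evy--Khintchine condition; the large-jump contribution is controlled by $\nu(\{|y|>1\}) < \infty$.
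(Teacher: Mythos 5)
Your proposal is correct and follows essentially the same route as the paper: both arguments rest on the positivity of $-\mathscr{A}_J$ to show that the quadratic form of $-\mathscr{A}$ dominates the $H^1$ Dirichlet form, then invoke Rellich compactness to conclude that $(I-\mathscr{A})^{-1}$ is compact, and finish with the spectral theorem. The only difference is presentational --- you work with the abstract form domain $\Dom(\sqrt{-\mathscr{A}})$ and the compact-resolvent criterion, whereas the paper concretely builds the completion $V$ of $C^\infty(M)$ under $\langle\cdot,\cdot\rangle_{\mathscr{A}}$ and constructs $T=(I-\mathscr{A})^{-1}$ via the Riesz representation theorem; the form-core subtlety you flag at the end is present (and treated equally lightly) in the paper's version.
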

\begin{rmk}
We will generally assume that (\ref{eq:lambda}) is listed with multiplicity, so that for all $n\in\mathbb{N}$, $-\lambda_n$ is the eigenvalue associated with $\phi_n$.
\end{rmk}
\begin{proof}
Without loss of generality, assume that $a=2$ so that
\begin{equation}\label{eq:Delta+AJ}
\mathscr{A} = \Delta + \mathscr{A}_J,
\end{equation}
where $\mathscr{A}_J$ is given by (\ref{eq:A_J}). Both $\mathscr{A}$ and $\mathscr{A}_J$ are generators of self-adjoint contraction semigroups, and hence $-\mathscr{A}$ and $-\mathscr{A}_J$ are positive, self-adjoint operators\footnote{See Theorem 4.6 on page 99 of Davies \cite{davies1}.}. For $f,g\in\Dom\mathscr{A}$, define
\begin{equation}\label{eq:innerprodA}
\langle f,g\rangle_{\mathscr{A}} = \langle f,g\rangle_2 -\langle\mathscr{A}f,g\rangle_2.
\end{equation}
The operator $I-\mathscr{A}$ is also positive and self-adjoint, and so by Theorem 11 of \cite{bernau}, there is a unique positive self-adjoint operator $B$ such that $B^2=I-\mathscr{A}$. By (\ref{eq:specL}), $I-\mathscr{A}$ is invertible, and hence $B$ is injective. Moreover,
\[\langle f,g\rangle_{\mathscr{A}} = \langle Bf,Bg\rangle_2, \hspace{15pt} \forall f,g\in\Dom\mathscr{A},\]
from which it is easy\footnote{Bilinearity and symmetry are clear, and positive-definiteness is immediate by injectivity of $B$.} to see that $\langle\cdot,\cdot\rangle_{\mathscr{A}}$ defines an inner product on $\Dom\mathscr{A}$.

Let $V$ denote the completion of $C^\infty(M)$ with respect to $\langle\cdot,\cdot\rangle_{\mathscr{A}}$. This is a ``L\'{e}vy analogue'' of the Sobolev space $H^1(M)$ considered in Labl\'{e}e \cite{lablee} or Grigor'yan \cite{grigor'yan}, where the completion is instead taken with respect to the Sobolev inner product
\begin{equation}\label{eq:H1innerprod}
\langle f,g\rangle_{H^1} = \langle f,g\rangle_2 - \langle\Delta f,g\rangle_2, \hspace{15pt} \forall f,g\in C^\infty(M).
\end{equation}
In the case when $M = {\mathbb R}^{d}$, spaces of this type are discussed in Section 3.10 of Jacob \cite{jacob}, who refers to them as \emph{anisotropic Sobolev spaces}. By (\ref{eq:Delta+AJ}), we have
\[\langle f,g \rangle_{\mathscr{A}} = \langle f,g\rangle_{H^1} - \langle\mathscr{A}_J f,g\rangle_2 \hspace{15pt} \forall f,g\in C^\infty(M),\]
and since $-\mathscr{A}_J$ is a positive operator, it follows that $\Vert f\Vert_{\mathscr{A}} \geq \Vert f\Vert_{H^1}$ for all $f\in C^\infty(M)$. Similarly, (\ref{eq:H1innerprod}) implies $\Vert f\Vert_{H^1}\geq \Vert f\Vert_2$ for all $f\in C^\infty(M)$. Hence
\[V\subseteq H^1(M)\subseteq L^2(M),\]
and
\begin{equation}\label{eq:ineq}
\Vert f\Vert_2\leq\Vert f\Vert_{H^1}\leq \Vert f\Vert_{\mathscr{A}}, \hspace{15pt} \forall f\in V.
\end{equation}
In particular, the inclusion $V\hookrightarrow H^1(M)$ is bounded. By Rellich's theorem\footnote{See Labl\'{e}e \cite{lablee} page 68.}, the inclusion $H^1(M)\hookrightarrow L^2(M)$ is compact, and hence so is the inclusion $i:V\hookrightarrow L^2(M)$ (it is a composition of a compact operator with a bounded operator).

Let $f\in L^2(M)$ and consider $l\in V^\ast$ given by
\[l(g) = \langle f,g\rangle_2 \hspace{15pt} \forall g\in V.\]
For all $g\in V$ we have by the Cauchy-Schwarz inequality
\[|l(g)|\leq \Vert f\Vert_2\Vert g\Vert_2\leq\Vert f\Vert_2\Vert g\Vert_{\mathscr{A}}.\]
Hence
\begin{equation}\label{eq:cs}
\Vert l\Vert_{V^\ast}\leq \Vert f\Vert_2,
\end{equation}
where $\Vert\cdot\Vert_{V^\ast}$ denotes the norm of $V^\ast$. By the Riesz representation theorem, there is a unique $v_f\in V$ for which
\[\langle v_f,g\rangle_{\mathscr{A}} = l(g), \hspace{15pt} \forall g\in V.\]
Moreover,
\[\Vert v_f\Vert_{\mathscr{A}} = \sup_{g\in V\setminus\{0\}}\frac{|\langle v_f,g\rangle_{\mathscr{A}}|}{\Vert g\Vert_{\mathscr{A}}} = \Vert l\Vert_{V^\ast}.\]
Define $T:L^2(M)\to V$ by $Tf=v_f$ for all $f\in L^2(M)$. Then
\begin{equation}\label{eq:Tinnerprod}
\langle Tf,g\rangle_{\mathscr{A}} = \langle f,g\rangle_2 \hspace{15pt} \forall f\in L^2(M),g\in V,
\end{equation}
and $T$ is bounded, since by (\ref{eq:cs}),
\[\Vert Tf\Vert_{\mathscr{A}} = \Vert l\Vert_{V^\ast}\leq \Vert f\Vert_2,\]
for all $f\in L^2(M)$. By (\ref{eq:ineq}),
\[\Vert Tf\Vert_{\mathscr{A}}\leq\Vert f\Vert_{\mathscr{A}} \hspace{15pt} \forall f\in V,\]
and so $T|_V$ is a bounded operator on $V$. We also have
\[T|_V=T\circ i,\]
and, since $i$ is compact, so too is $T|_V$. By symmetry of inner products and equation (\ref{eq:Tinnerprod}), $T$ is self-adjoint. Equation (\ref{eq:Tinnerprod}) also implies that $T|_V$ is a positive operator, and that $0$ is not an eigenvalue of $T|_V$ (indeed, if $Tf=0$, then $\Vert f\Vert_2^2=\langle Tf,f\rangle_{\mathscr{A}}=0$).

By the Hilbert-Schmidt theorem\footnote{See Simon \cite{simon} Section 3.2.}, the spectrum of $T|_V$ consists of a sequence $(\alpha_n)$ of positive eigenvalues that decreases to $0$. Each eigenspace is finite-dimensional, and the corresponding eigenvectors can be normalised so as to form an orthonormal basis $(v_n)$ of $(V,\langle\cdot,\rangle_{\mathscr{A}})$.

In fact, it is easy to see from the definition of $\langle\cdot,\cdot\rangle_{\mathscr{A}}$ that
\begin{equation}\label{eq:T}
T=(I-\mathscr{A})^{-1},
\end{equation}
and hence the spectrum of $\mathscr{A}$ is just $\{1-\alpha_n^{-1}:n\in\mathbb{N}\}$, with corresponding eigenvectors still given by the $v_n$. Moreover, we may scale these eigenvectors so that they are $L^2$-orthonormal. Indeed, for each $n\in\mathbb{N}$, let
\[\phi_n = \frac{1}{\sqrt{\alpha_n}} v_n.\]
Then for all $m,n\in\mathbb{N}$,
\[\langle \phi_n,\phi_m\rangle_2 = \frac{1}{\sqrt{\alpha_n\alpha_m}}\langle Tv_n,v_m\rangle_{\mathscr{A}} = \sqrt{\frac{\alpha_n}{\alpha_m}}\langle v_n,v_m\rangle_{\mathscr{A}} = \delta_{m,n}.\]
By denseness of $V$ in  $L^2(M)$, the $\phi_n$ form an orthonormal basis of $L^2(M)$.

Finally, let $\lambda_n=\alpha_n^{-1}-1$ for each $n\in\mathbb{N}$. Then $(\lambda_n)$ satisfies equation (\ref{eq:lambda}), since $-\mathscr{A}$ is a positive operator, and $(\alpha_n)$ is a positive sequence that decreases to $0$.
\end{proof}

It is well-known that the heat semigroup $(K_t,t\geq 0)$ associated with Brownian motion on a compact manifold is trace-class, and possesses an integral kernel. The final two results of this section extend this to the L\'{e}vy semigroup $(T_t,t\geq 0)$, subject to the assumption that $a>0$.

\begin{thm}\label{thm:trace-class}
Let $X$ be an isotropic L\'{e}vy process on $M$ with non-trivial Brownian part. Then the transition semigroup operator $T_t$ is trace-class for all $t>0$.
\end{thm}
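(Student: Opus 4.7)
The plan is to combine the spectral decomposition from Theorem~\ref{thm:DiscSpec} with a quadratic form comparison between $\mathscr{A}$ and the Laplace--Beltrami operator, and then invoke the classical trace-class property of the heat semigroup on a compact Riemannian manifold.

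First I would note that, since $(T_t,t\geq 0)$ is a self-adjoint contraction semigroup on $L^2(M)$ with generator $\mathscr{A}$, and $\mathscr{A}$ admits the $L^2$-orthonormal eigenbasis $(\phi_n)$ with eigenvalues $-\lambda_n$, the spectral functional calculus gives $T_t\phi_n = e^{-\lambda_n t}\phi_n$ for all $n\in\mathbb{N}$. Moreover, $T_t$ is positive self-adjoint, so its trace norm coincides with its trace and $T_t$ is trace-class precisely when
\[\sum_{n=1}^\infty e^{-\lambda_n t} < \infty, \hspace{15pt} \forall t>0.\]

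The heart of the argument is the eigenvalue comparison $\lambda_n \geq \tfrac{a}{2}\mu_n$, where $(\mu_n)$ are the eigenvalues of $-\Delta$ as in (\ref{eq:LaplacianEigen}). Since $\mathscr{A} = \tfrac{a}{2}\Delta + \mathscr{A}_J$ and $-\mathscr{A}_J$ is a positive operator (by the remark following Theorem~\ref{thm:selfadj}),
\[\langle -\mathscr{A} f,f\rangle_2 \geq \tfrac{a}{2}\langle -\Delta f,f\rangle_2, \hspace{15pt} \forall f\in C^\infty(M).\]
As $C^\infty(M)$ is a form core for both operators, the Courant--Fischer min-max principle turns this pointwise bound into the desired inequality on the $n$-th eigenvalues. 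Finally, Weyl's law for the Laplace--Beltrami operator gives $\mu_n\sim Cn^{2/d}$, so that $\sum_n e^{-\tfrac{a}{2}\mu_n t}$ (the trace of the heat semigroup) is finite for every $t>0$; the comparison bound then forces $\sum_n e^{-\lambda_n t}<\infty$ and concludes the proof.

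The main obstacle I anticipate is handling the min-max step cleanly, since the form domains $V$ and $H^1(M)$ of $-\mathscr{A}$ and $-\Delta$ do not coincide, and the quadratic form inequality is naturally stated only on $C^\infty(M)$. Using the density of $C^\infty(M)$ in both $V$ and $H^1(M)$, together with the norm comparison $\|\cdot\|_2\leq\|\cdot\|_{H^1}\leq\|\cdot\|_{\mathscr{A}}$ recorded in (\ref{eq:ineq}), one reduces each of the min-max characterisations of $\lambda_n$ and $\mu_n$ to an infimum over $n$-dimensional subspaces of the common core $C^\infty(M)$, on which the pointwise form comparison applies directly. After this, the argument is essentially mechanical.
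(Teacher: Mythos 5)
Your proposal is correct and follows essentially the same route as the paper: both reduce trace-class to summability of $\sum_n e^{-\lambda_n t}$ via the eigenbasis from Theorem~\ref{thm:DiscSpec}, derive $\lambda_n\geq\tfrac{a}{2}\mu_n$ from the quadratic form inequality $-\langle\mathscr{A}f,f\rangle\geq\tfrac{a}{2}\langle-\Delta f,f\rangle$ on $C^\infty(M)$ together with the min-max principle, and then compare with the trace of the heat semigroup (the paper simply cites that $K_t$ is trace-class where you invoke Weyl's law, which amounts to the same thing). Your extra care about the form cores in the min-max step is a point the paper glosses over, but it is a refinement rather than a different argument.
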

\begin{proof}
We again assume $a=2$, so that $\mathscr{A}$ has the form (\ref{eq:Delta+AJ}). The case for general $a>0$ is very similar.

Let $(\lambda_n$ and $(\phi_n)$ be as in the statement of Theorem \ref{thm:DiscSpec}, and let $(\mu_n)$ and $(\psi_n)$ be the analogous sequences for $\Delta$, so that $\psi_n$ is the $n^{\text{th}}$ eigenvector of $\Delta$, with associated eigenvalue $-\mu_n$.

Let $(K_t,t\geq 0)$ denote the heat semigroup associated with Brownian motion on $M$. This operator semigroup is known to possess many wonderful properties, including being trace-class. It follows that
\begin{equation}\label{eq:trKt}
\tr K_t = \sum_{n=1}^\infty\langle K_t\psi_n,\psi_n\rangle = \sum_{n=1}^\infty e^{-t\mu_n}<\infty
\end{equation}
for all $t>0$. As an element of $[0,\infty]$, the trace of each $T_t$ is given by
\begin{equation}\label{eq:trTt}
\tr T_t = \sum_{n=1}^\infty\langle T_t\phi_n,\phi_n\rangle = \sum_{n=1}^\infty e^{-t\lambda_n}.
\end{equation}
By the min-max principle for self-adjoint semibounded operators\footnote{Simon \cite{simon} page 666.}, we have for all $n\in\mathbb{N}$,
\[\lambda_n = -\sup_{f_1,\ldots,f_{n-1}\in C^\infty(M)}\left[\inf_{f\in\{f_1,\ldots,f_{n-1}\}^{\perp}, \;\Vert f\Vert=1}\langle\mathscr{A}f,f\rangle\right],\]
and
\[\mu_n = -\sup_{f_1,\ldots,f_{n-1}\in C^\infty(M)}\left[\inf_{f\in\{f_1,\ldots,f_{n-1}\}^{\perp}, \;\Vert f\Vert=1}\langle\Delta f,f\rangle\right].\]
As noted in the proof of Theorem \ref{thm:DiscSpec}, for all $f\in C^\infty(M)$,
\[-\langle\mathscr{A}f,f\rangle \geq -\langle\Delta f,f\rangle \geq 0,\]
and hence $\lambda_n\geq\mu_n$ for all $n\in\mathbb{N}$. But then $e^{-t\lambda_n}\leq e^{-t\mu_n}$ for all $t>0$ and $n\in\mathbb{N}$, and so, comparing (\ref{eq:trTt}) with (\ref{eq:trKt}),
\[\tr T_t<\tr K_t<\infty\]
for all $t>0$.
\end{proof}
By Lemma 7.2.1 of Davies \cite{davies2}, we immediately obtain the following.
\begin{cor}\label{cor:kernel}
Let $X$ be an isotropic L\'{e}vy process on $M$ with non-trivial Brownian part. Then its semigroup $(T_t,t\geq 0)$ has a square-integrable kernel. That is, for all $t > 0$ there is a map $p_t\in L^2(M\times M)$ such that
\[T_tf(x) = \int_Mf(y)p_t(x,y)\mu(dy)\]
for all $f\in L^2(M)$ and $x\in M$. Moreover, we have the following $L^2$-convergent expansion:
\[p_t(x,y) = \sum_{n=1}^\infty e^{-\lambda_nt}\phi_n(x)\phi_n(y), \hspace{15pt} \forall x,y\in M,\; t\geq 0.\]
\end{cor}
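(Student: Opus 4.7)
The plan is to apply Lemma 7.2.1 of Davies \cite{davies2} directly, after assembling the ingredients now in place. That lemma states (in the form we need) that a self-adjoint, trace-class operator $T$ on $L^2(X,m)$ with spectral decomposition $Tf = \sum_n \tau_n \langle f,\phi_n\rangle \phi_n$ relative to an orthonormal eigenbasis $(\phi_n)$ admits an integral kernel $K \in L^2(X\times X)$ given by $K(x,y) = \sum_n \tau_n \phi_n(x)\phi_n(y)$, with the series converging in $L^2(X\times X)$.

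The ingredients for us are as follows: $T_t$ is self-adjoint by Theorem \ref{thm:selfadj}, trace-class by Theorem \ref{thm:trace-class}, and the orthonormal basis $(\phi_n)$ of eigenvectors of $\mathscr{A}$ supplied by Theorem \ref{thm:DiscSpec} diagonalises $T_t$ through the functional calculus identity $T_t = e^{t\mathscr{A}}$, yielding $T_t\phi_n = e^{-\lambda_n t}\phi_n$ for all $n \in \mathbb{N}$, and hence the spectral representation
\[T_t f = \sum_{n=1}^\infty e^{-\lambda_n t}\langle f,\phi_n\rangle_2\, \phi_n, \qquad f \in L^2(M).\]
With eigenvalues $\tau_n = e^{-\lambda_n t}$ identified, Davies' lemma immediately delivers the kernel $p_t \in L^2(M\times M)$ together with the claimed expansion. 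As a self-contained check on $L^2$-convergence, the tensor family $\{\phi_n\otimes\phi_m\}_{n,m}$ is orthonormal in $L^2(M\times M)$, so for $N' > N$,
\[\Big\Vert \sum_{n=N+1}^{N'} e^{-\lambda_n t}\phi_n\otimes\phi_n\Big\Vert_{L^2(M\times M)}^{\,2} = \sum_{n=N+1}^{N'} e^{-2\lambda_n t},\]
which vanishes as $N,N'\to\infty$ by the bound $\sum_n e^{-t\lambda_n} < \infty$ underpinning the proof of Theorem \ref{thm:trace-class}.

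Since Davies' lemma does essentially all of the work, there is no substantive obstacle here. The only small verification needed is that the integral operator defined by the resulting kernel $p_t$ agrees with $T_t$ on all of $L^2(M)$; this is immediate from the spectral expansion by testing against each basis vector $\phi_m$ and recovering $e^{-\lambda_m t}\phi_m$ via orthonormality of $(\phi_n)$.
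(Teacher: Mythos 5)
Your proposal is correct and follows exactly the route the paper takes: the paper states that the corollary follows immediately from Lemma 7.2.1 of Davies applied to the self-adjoint, trace-class operators $T_t$, with the eigenbasis $(\phi_n)$ and eigenvalues $e^{-\lambda_n t}$ from Theorem \ref{thm:DiscSpec}. Your additional tail-sum check of $L^2(M\times M)$-convergence is a harmless (and correct) supplement to what the paper leaves implicit.
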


It is natural to enquire as to whether similar results hold in the pure jump case when $a=0$ (perhaps under some further condition on $\nu$)? When $M$ is a compact symmetric space, we can find an orthonormal basis of eigenfunctions that are common to the $L^{2}$--semigroups associated with all isotropic L\'{e}vy processes (see the results in section 5 of \cite{AT0}) . The key tool here, which enables a precise description of the spectrum of eigenvalues, is Gangolli's L\'{e}vy--Khinchine formula \cite{Gang1}. In the general case, as considered here, such methods are not available, and we are unable to make further progress at the present time.

\begin{ack}
We thank the referee for very helpful comments on an earlier version of this paper.
\end{ack}
\bibliographystyle{plain}
\bibliography{Levysgrps}

\begin{thebibliography}{10}

\bibitem{App1}
D.~Applebaum.
\newblock {Compound Poisson processes and L\'{e}vy processes in groups and
  symmetric spaces}.
\newblock {\em J.~Theor.~Prob.}, 13:383--425, 2000.

\bibitem{dave}
D.~Applebaum.
\newblock {\em {L\'{e}vy Processes and Stochastic Calculus}}.
\newblock Cambridge Univ. Press, 2009.

\bibitem{AppLieProb}
D.~Applebaum.
\newblock {\em {Probability on Compact Lie Groups}}.
\newblock Springer, 2014.

\bibitem{AppEstrade}
D.~Applebaum and A.~Estrade.
\newblock {Isotropic L\'{e}vy processes on Riemannian manifolds}.
\newblock {\em Ann. Probab.}, 28(1):166--184, 2000.

\bibitem{AppKuInvMeas}
D.~Applebaum and H.~Kunita.
\newblock Invariant measures for {L\'{e}}vy flows of diffeomorphisms.
\newblock {\em Proc. R. Soc. Edinb.}, 30A:925--946, 2000.

\bibitem{AT0}
D.~Applebaum and T.~Le~Ngan.
\newblock {Transition densities and traces for invariant Feller processes on
  compact symmetric spaces}.
\newblock {\em Potential Analysis}, 49:475--501, 2018.

\bibitem{bernau}
S.J. Bernau.
\newblock The square root of a positive self-adjoint operator.
\newblock {\em J.~Aus.~Math.~Soc.}, 8(1):17--36, 1968.

\bibitem{davies1}
E.B. Davies.
\newblock {\em One-Parameter Semigroups}.
\newblock Academic Press, Inc., 1980.

\bibitem{davies2}
E.B. Davies.
\newblock {\em Linear Operators and their Spectra}.
\newblock Cambridge Univ. Press, 2007.

\bibitem{elworthy}
K.D. Elworthy.
\newblock {\em Geometric Aspects of Diffusions on Manifolds}.
\newblock Springer, 1988.

\bibitem{Gang1}
R.~Gangolli.
\newblock Isotropic infinitely divisible measures on symmetric spaces.
\newblock {\em Acta Math.}, 111:213--46, 1964.

\bibitem{Gang2}
R.~Gangolli.
\newblock Sample functions of certain differential processes on symmetric
  spaces.
\newblock {\em Pacific J.Math}, 15:477--96, 1965.

\bibitem{goldstein}
J.~Goldstein.
\newblock {\em Semigroups of Linear Operators and Applications}.
\newblock Dover Publications, Inc., 2nd edition, 2017.

\bibitem{grigor'yan}
A.~Grigor'yan.
\newblock {\em Heat Kernel and Analysis on Manifolds}.
\newblock American Math. Soc., 2009.

\bibitem{hsu}
E.P. Hsu.
\newblock {\em Stochastic Analysis on Manifolds}.
\newblock American Math. Soc., 2002.

\bibitem{jacob}
N.~Jacob.
\newblock {\em Pseudo Differential Operators and Markov Processes}, volume~I.
\newblock Imperial College Press, 2001.

\bibitem{lablee}
O.~Labl{\'{e}}e.
\newblock {\em Spectral Theory in Riemannian Geometry}.
\newblock European Math. Soc., 2015.

\bibitem{Liao}
M.~Liao.
\newblock {\em L{\'{e}}vy Processes in Lie Groups}.
\newblock Cambridge Univ. Press, 2004.

\bibitem{LiaoN}
M.~Liao.
\newblock {\em Invariant Markov Processes Under Lie Group Actions}.
\newblock Springer, 2018.

\bibitem{Moh1}
A.~Mohari.
\newblock Ergodicity of homogeneous {B}rownian flows.
\newblock {\em Stoch. Proc. Appl.}, 105:99--116, 2003.

\bibitem{Moh2}
A.~Mohari.
\newblock Ergodicity of {L\'{e}}vy flows.
\newblock {\em Stoch. Proc. Appl.}, 112:245--259, 2004.

\bibitem{PZ}
S.~Peszat and J.~Zabczyk.
\newblock {\em Stochastic Partial Differential Equations with L{\'{e}}vy
  Noise}, volume 113 of {\em Encyclopedia of Mathematics and Its Applications}.
\newblock Cambridge Univ. Press, 2007.

\bibitem{reed&simonII}
M.~Reed and B.~Simon.
\newblock {\em Methods of Modern Mathematical Physics: Fourier Analysis,
  Self-Adjointness}, volume~II.
\newblock Academic Press, Inc., 1975.

\bibitem{simon}
B.~Simon.
\newblock {\em Operator Theory, A Comprehensive Course in Analysis, Part 4}.
\newblock American Math. Soc., 2015.

\end{thebibliography}
\end{document}